\documentclass[11pt]{amsart}

\usepackage{fullpage,graphicx,amsfonts,amssymb,amsmath,amsthm}
\usepackage[all]{xy}
\usepackage[left=.8in,top=.8in,bottom=.8in,right=.8in,letterpaper]{geometry} 
\usepackage{mathtools}
\usepackage{graphicx}
\graphicspath{ {images/} }
\usepackage{enumerate}
\usepackage{hyperref} 
\usepackage{setspace}
\usepackage{amssymb} 
\usepackage{ esint }

\allowdisplaybreaks
\theoremstyle{plain} %--default
\newtheorem{theorem}    {Theorem}

\newtheorem{lemma}      [theorem]{Lemma}
\newtheorem{corollary}  [theorem]{Corollary}
\newtheorem{proposition}[theorem]{Proposition}

\theoremstyle{definition}

\theoremstyle{remark}

\newcommand\norm[1]{\left\lVert#1\right\rVert}
\usepackage{url}

\begin{document} 

\title{The Isomorphic Section-Projection Problem for Convex Bodies}

\author{Johannes Hosle \\ 07/12/2026}

\address{Department of Mathematics, Massachusetts Institute of Technology}
\email{jhosle@mit.edu}
\maketitle 

\begin{abstract}
Let $K, L$ be convex bodies in $\mathbb{R}^n$ with $K$ centered. Assume that $|K \cap \theta^{\perp}| \le |L|\theta^{\perp}|$ for all $\theta \in S^{n-1}$. We prove that $|K| \le c\sqrt{n}|L|$, which is sharp up to the choice of the absolute constant. The result gives the sharp isomorphic order in a mixed section-projection comparison problem, complementing the isomorphic Busemann-Petty and Shephard problems. It also removes the John's position assumption from an earlier result of the author, up to an absolute constant factor.
\end{abstract}

\section{Introduction}
The subject of comparing volumes of convex sets via comparisons of lower dimensional measurements is a central theme in convex geometry. The Busemann-Petty problem, posed in \cite{busemannpetty}, is the following question:

\textit{If $K$ and $L$ are origin-symmetric convex bodies in $\mathbb{R}^n$ such that \begin{align*}
    |K \cap \theta^{\perp}| \le |L \cap \theta^{\perp}|
\end{align*} for all $\theta \in S^{n-1}$, does it follow that $|K| \le |L|$?} 

The answer is affirmative in dimensions $n\le 4$ and negative in dimensions 5 and higher. The disproof for $n\ge 5$ was carried out by Papadimitrakis \cite{papa}, Gardner \cite{Gardner1}, and Zhang \cite{Zhang1}, while the affirmative answers for $n = 3, 4$ were given by Gardner \cite{Gardner2} and Zhang \cite{Zhang2} respectively. Note that the case $n = 2$ is immediate, since the assumptions imply $K \subset L$. A uniform solution for all dimensions using the method of Fourier transforms on the sphere was accomplished by Gardner, Koldobsky, and Schlumprecht \cite{GKS}.

The Shephard problem, posed in \cite{shepard}, asks the analogous question about projections onto hyperplanes, rather than hyperplane sections:

\textit{If $K$ and $L$ are origin-symmetric convex bodies in $\mathbb{R}^n$ such that \begin{align*}
    |K|{\theta^{\perp}}| \le |L|{\theta^{\perp}}|
\end{align*} for all $\theta \in S^{n-1}$, does it follow that $|K| \le |L|$?}

As in the case of the Busemann-Petty problem, the answer to this question is negative in general. The claim only holds in dimensions $n = 2$, as was demonstrated by Schneider \cite{schneider} and Petty \cite{petty}.

A variant of the original Busemann-Petty and Shephard problems asks if, under the same hypotheses, one can conclude $|K| \le C(n) |L|$ for a dimensional constant $C(n)$, and it asks for the optimal order of magnitude of $C(n)$. These problems are known as the isomorphic Busemann-Petty problem and isomorphic Shephard problem respectively. For the first question, Milman and Pajor \cite{milmanpajor} gave the estimate $|K| \le cL_K |L|,$ where $L_K$ is the isotropic constant of $K$ and $c>0$ is an absolute constant. The question of whether $L_K$ is uniformly bounded for convex bodies $K$ in any dimension was known as the \textit{Bourgain slicing problem}, which has now been answered affirmatively by Klartag and Lehec \cite{KlartagLehec2025SlicingGuan}. As an immediate corollary, it follows that $|K| \le c |L|$ under the assumptions of the Busemann-Petty problem. For the isomorphic Shephard problem, Ball \cite{ballshadows} gave the bound $|K| \le c\sqrt{n} |L|.$ Moreover, by considering random polytopes, Ball showed that his bound was sharp, that is, for all $n$, there exist $K, L$ origin-symmetric convex bodies in $\mathbb{R}^n$ with $|K|\theta^{\perp}| \le |L|\theta^{\perp}|$ for all $\theta \in S^{n-1}$ such that $|K| \ge c\sqrt{n}|L|$, for some absolute constant $c > 0$.

We will now discuss two natural variants of the (isomorphic) Busemann-Petty and Shephard problems. Both use mixed section and projection hypotheses. Giannopoulos and Koldobsky \cite{giankold} proved that if $K, L$ are origin-symmetric convex bodies in $\mathbb{R}^n$ where now $|K|\theta^{\perp}| \le |L \cap \theta^{\perp}|$ for all $\theta \in S^{n-1}$, then $|K| \le |L|$. Hosle \cite{Hosle} considered the reverse question. He showed that if $K, L$ are origin-symmetric convex bodies in $\mathbb{R}^n$ with $|K \cap \theta^{\perp}| \le |L|\theta^{\perp}|$ for all $\theta \in S^{n-1}$, and $K$ has outradius $R$ and $L$ has inradius $r$, then $|K| \le \frac{R}{r}|L|.$ In particular, if $K, L$ are in John's position, this yields the bound $|K| \le \sqrt{n}|L|.$ In this note, we prove that, even without the assumption of being in John's position, we obtain a bound with this same $\sqrt{n}$ order of magnitude.

\begin{theorem}\label{mainthm}
Let $K, L$ be convex bodies in $\mathbb{R}^n$, with $K$ centered, such that \begin{align*}
    |K \cap \theta^{\perp}| &\le |L|\theta^{\perp}|
\end{align*} for all $\theta \in S^{n-1}$. Then, \begin{align*}
    |K| &\le c\sqrt{n}|L|
\end{align*} for an absolute constant $c > 0$.
\end{theorem}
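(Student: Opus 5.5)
We reduce to a comparison with a Euclidean ball and then invoke Ball's isomorphic Shephard bound quoted in the introduction. The plan has three steps: normalize $K$ by a linear map that preserves the hypothesis, use slicing estimates to make all central sections of $K$ comparable to $|K|^{(n-1)/n}$, and compare $L$ with a suitable ball.

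\emph{Step 1: the hypothesis is $SL(n)$-invariant.} Let $T\in SL(n)$ and $\theta\in S^{n-1}$, and put $\eta = T^t\theta/|T^t\theta|$. Since $T^{-1}(\theta^\perp)=\eta^\perp$, the restriction $T:\eta^\perp\to\theta^\perp$ has Jacobian $|T^t\theta|$ (this uses $\det T = 1$). This gives
\begin{align*}
|TK\cap\theta^\perp| = |T^t\theta|\,|K\cap\eta^\perp|.
\end{align*}
For projections, Cauchy's formula writes $|L|\theta^\perp|$ as one half of the integral of $|\langle \theta, \nu_L\rangle|$ against surface measure. Transforming normals by the cofactor matrix $T^{-t}$ (again using $\det T=1$) yields
\begin{align*}
|TL|\theta^\perp| = |T^t\theta|\,|L|\eta^\perp|.
\end{align*}
Thus both sides of the hypothesis scale by the same factor $|T^t\theta|$. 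Volumes are unchanged, and $TK$ remains centered. Hence we may replace $(K,L)$ by $(TK,TL)$ and assume $K$ is in isotropic position, with $|K|$ kept as a free parameter by scaling.

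\emph{Step 2: sections of isotropic $K$.} For a centered isotropic convex body, Hensley's theorem (in the form used by Milman--Pajor) gives
\begin{align*}
|K\cap\theta^\perp| \simeq \frac{|K|^{(n-1)/n}}{L_K}
\end{align*}
uniformly in $\theta\in S^{n-1}$, up to absolute constants. By Klartag--Lehec, $L_K\le c$, so $|K\cap\theta^\perp|\ge c_1|K|^{(n-1)/n}$ for every $\theta$. The hypothesis then yields $|L|\theta^\perp|\ge c_1|K|^{(n-1)/n}$ for all $\theta$.

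\emph{Step 3: comparison with a ball.} Let $B=rB_2^n$, with $r$ chosen so that $|B|\theta^\perp| = r^{n-1}\omega_{n-1}= c_1|K|^{(n-1)/n}$. Then $|B|\theta^\perp|\le|L|\theta^\perp|$ for all $\theta$. Ball's isomorphic Shephard theorem compares the projections of a symmetric body with those of a convex body, and applied to $B$ and $L$ it gives $|B|\le c\sqrt n\,|L|$. It remains to check that $|B|\ge c'|K|$. From the choice of $r$,
\begin{align*}
|B| = \omega_n r^n = \omega_n\bigl(c_1|K|^{(n-1)/n}/\omega_{n-1}\bigr)^{n/(n-1)}.
\end{align*}
Since $\omega_n^{1/n}\simeq\omega_{n-1}^{1/(n-1)}\simeq n^{-1/2}$, we have $\omega_n/\omega_{n-1}^{n/(n-1)}\ge c$. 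Also $c_1^{n/(n-1)}\ge c_1^2$ for $n\ge2$. Together these give $|B|\ge c'|K|$, and combining with the previous inequality proves the theorem.

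The main obstacle I expect is Step 2: obtaining the \emph{uniform} lower bound on central sections with the correct dependence on $L_K$. This requires the two-sided Hensley estimate for centered (not necessarily symmetric) isotropic bodies together with the Klartag--Lehec bound. Step 1 also needs care, since the transformation rule for projections must be verified with the same factor $|T^t\theta|$ as for sections; if the factors did not match, the reduction to isotropic position would fail. A smaller point in Step 3 is that Ball's result must apply with the symmetric body being the ball and the other body being the possibly non-symmetric $L$; otherwise one symmetrizes $L$ first, which changes projections only by constant factors up to the Rogers--Shephard loss, and this needs to be checked.
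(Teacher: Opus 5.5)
\textbf{There is a genuine gap: Step 1 is false, and Steps 2--3 depend on it.} For $T\in SL(n)$ the correct transformation rules are
\begin{align*}
|TK\cap\theta^\perp| = \frac{|K\cap\eta^\perp|}{|T^t\theta|},\quad \eta=\frac{T^t\theta}{|T^t\theta|},
\qquad
|TL|\theta^\perp| = |T^{-1}\theta|\,|L|\zeta^\perp|,\quad \zeta=\frac{T^{-1}\theta}{|T^{-1}\theta|}.
\end{align*}

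\emph{Sections.} Take $A\subset\eta^\perp$. Then $|A+[0,1]\eta|=|A|$, while $|TA+[0,1]T\eta|=|TA|\,\langle T\eta,\theta\rangle=|TA|\,|T^t\theta|$. So the Jacobian of $T$ on $\eta^\perp$ is $1/|T^t\theta|$, not $|T^t\theta|$.

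\emph{Projections.} The rule is $\Pi(TL)=T^{-t}\Pi L$. Your projection formula is actually the one for $T^{-t}L$.

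The two rules refer to different directions $\eta$ and $\zeta$. If you align them by moving $K$ by $T$ and $L$ by $T^{-t}$, the hypothesis becomes $|T^t\theta|^2\,|TK\cap\theta^\perp|\le|T^{-t}L|\theta^\perp|$, which is a different condition. The structural reason is this. Let $IK$ be the intersection body, with $\rho_{IK}(\theta)=|K\cap\theta^\perp|$. Then $I(TK)=T^{-t}IK$ and $\Pi(TL)=T^{-t}\Pi L$, but the hypothesis compares the \emph{radial} function of $IK$ with the \emph{support} function of $\Pi L$, and these transform differently. Unlike the Busemann--Petty and Shephard hypotheses, the mixed one is only $O(n)$-invariant.

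Here is a concrete counterexample in $\mathbb{R}^2$. Let $L=[-1,1]\times[-\delta,\delta]$, and let $K$ be a thin centered rectangle of length $1/\sqrt2$ and width $t\ll\delta$, with long side along $(1,1)/\sqrt2$. One checks that the hypothesis holds for $(K,L)$. Now apply the shear $T(x_1,x_2)=(x_1-x_2,x_2)$. The body $TK$ contains a segment of length $1/2$ on the $e_2$-axis, while $TL$ projects onto that axis with length $2\delta$. So the hypothesis fails for $(TK,TL)$ at $\theta=e_1$.

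\textbf{Consequence for Step 2.} You cannot pass to isotropic position. Without it, the uniform bound $|K\cap\theta^\perp|\ge c_1|K|^{(n-1)/n}$ is false: for $K=[-N,N]\times[-1/N,1/N]$ one has $|K|=4$ but a central section of length $2/N$. Hensley only gives $|K\cap\theta^\perp|\simeq|K|/h_{Z_2(K)}(\theta)$. What you then know is that $h_{\Pi L}$ is bounded below by $c|K|$ times the radial function of the ellipsoid $Z_2(K)^\circ$. A Shephard-type comparison with a ball or ellipsoid $B$ needs the support-function inequality $h_{\Pi B}\le h_{\Pi L}$, and converting the radial bound into that costs a factor depending on the eccentricity of $Z_2(K)$.

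\textbf{The missing idea.} Keep the ellipsoid and absorb the radial-versus-support mismatch only at the level of volumes, as the paper does.
\begin{enumerate}
\item Hensley plus the hypothesis give $\rho_{\Pi^\circ L}\le h_F$ with $F=C|K|^{-1}Z_2(K)$.
\item By the bounded volume ratio of $L_1$-balls (Ball, Bourgain--Milman), there is an ellipsoid $J\subset\Pi^\circ L$ with $|\Pi^\circ L|\le C^n|J|$.
\item The ellipsoid lemma turns $\rho_J\le h_F$ into $|J|\le|F|$, even though $J\not\subseteq F$ in general.
\item Zhang's inequality converts the resulting bound on $|\Pi^\circ L|$ into $|K|\le C\sqrt n\,L_K^{n/(n-1)}|L|$.
\end{enumerate}
Your Steps 2--3 do prove the theorem when $Z_2(K)$ is a Euclidean ball. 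In that case $\Pi^\circ L\subseteq F$ directly, so the volume-ratio theorem is not even needed. But that case is not a reduction of the general one.
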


Theorem~\ref{mainthm} is sharp. Indeed, this follows from the construction of Ball \cite{ballshadows} for the isomorphic Shephard problem, since the area of a projection is always at least that of the corresponding section, which means that his example also works for this problem.

\section{Preliminaries and auxiliary results}

\subsection{General Terminology} Here, $S^{n-1}$ denotes the unit sphere in $\mathbb{R}^n$ and $\omega_n$ denotes the volume of the unit ball in $\mathbb{R}^n.$ The ambient dimension satisfies $n\ge 2$ throughout. A set $K$ in $\mathbb{R}^n$ is called convex if the interval joining any two points in $K$ is also contained in $K$. If $K$ is also compact and has non-empty interior, $K$ is  called a convex body. An origin-symmetric convex body $K$ satisfies $x \in K \Leftrightarrow -x \in K$. A convex body $K$ is said to be centered if its barycenter is at the origin.

The Minkowski functional of a convex body $K$ defined as \begin{align*}
\norm{x}_K = \min \{a\ge 0: x \in aK\}
\end{align*} for $x \in \mathbb{R}^n$, and $\rho_K(x) = \norm{x}_K^{-1}$ will be the radial function. If $0 \in \text{int}(K)$, then for $\theta \in S^{n-1}$, $\rho_K(\theta)$ is the distance from the origin to $\partial K$ in the direction of $\theta$. Next, the support function $h_K$ of $K$ is defined by \begin{align*}
h_K(x) = \max_{\xi \in K}\langle x,\xi \rangle.
\end{align*} The polar body of $K$ is defined by \begin{align*}
    K^{\circ} = \{ x \in \mathbb{R}^n: h_K(x) \le 1\}.
\end{align*}The Gauss map of $K$ is the map $\nu_K: \partial K \to S^{n-1}$ that sends $y \in \partial K$ to the set of normal vectors to $K$ at $y$. The surface area measure $S_K$ is then defined by $S_K(E) = H_{n-1}(\nu_K^{-1}(E))$ for all measurable $E \subseteq S^{n-1}$, where $H_{n-1}$ is the $(n-1)-$dimensional Hausdorff measure on $\mathbb{R}^n$. 

Given a convex body $K$ in $\mathbb{R}^n$, the isotropic position of $K$ is defined as the (unique up to orthogonal transformations) affine image $\tilde{K}$ of $K$ with volume 1 and barycenter at the origin such that \begin{align*}
\int_{\tilde{K}}x_ix_j dx = L_K^2 \delta_{ij}
\end{align*} for all $i, j \in \{1,..,n\}$ and some constant $L_K > 0$. Here $x = (x_1, ..., x_n)$ are the coordinates in $\mathbb{R}^n$ and $\delta_{ij}$ is the Kronecker delta symbol. If $K$ is in isotropic position to begin with, we call $K$ isotropic. We will denote by $L_K$ the isotropic constant of $K$. As mentioned earlier, Klartag and Lehec \cite{KlartagLehec2025SlicingGuan} proved \begin{align}\label{slicing}
    L_K &\le C
\end{align} for all convex bodies $K$ in any dimension $n$, where $C > 0$ is an absolute constant. This settled a longstanding question posed by Bourgain \cite{Bourgain1986MaximalConvexBodies}, \cite{Bourgain1987BanachHarmonic}, who also gave the nontrivial estimate $L_K \le c n^{\frac{1}{4}} \log n$ \cite{bourgainbound}. Throughout, $c, C, c_1,$ etc. will denote absolute positive constants.

\subsection{The $L_2$-centroid body}

We first recall the basic facts about the $L_2$-centroid body which will be used in the proof of the main theorem. If $K$ is a centered convex body in $\mathbb{R}^n$, its normalized $L_2$-centroid body $Z_2(K)$ is the origin-symmetric ellipsoid whose support function is given by
\begin{equation*}\label{Z2def}
    h_{Z_2(K)}(\theta)
    =
    \left(\frac{1}{|K|}\int_K |\langle x,\theta\rangle|^2\,dx\right)^{1/2},
    \qquad \theta\in S^{n-1}.
\end{equation*}
Equivalently, if
\begin{equation*}\label{covariancematrixdef}
    M_K
    =
    \frac{1}{|K|}\int_K x\otimes x\,dx
\end{equation*}
is the covariance matrix of the uniform measure on $K$, then
\begin{equation*}\label{Z2covariance}
    h_{Z_2(K)}(\theta)
    =
    \langle M_K\theta,\theta\rangle^{1/2},
    \qquad
    Z_2(K)=M_K^{1/2}B_2^n.
\end{equation*}
%In particular, $Z_2(K)$ is affine-covariant: for every invertible linear map $T$,
%\begin{equation}\label{Z2affinecovariance}
%    Z_2(TK)=T Z_2(K).
%\end{equation}

We shall use the following estimate of Hensley \cite{hensley}. In the normalized case $|K|=1$, it says that $h_{Z_2(K)}(\theta)$ is comparable to the reciprocal of the central hyperplane section in direction $\theta$. After scaling, this gives the following form.

\begin{proposition}[Hensley's estimate \cite{hensley}]\label{HensleyZ2Lemma}
Let $K$ be a centered convex body in $\mathbb{R}^n$. Then, for every $\theta\in S^{n-1}$,
\begin{equation*}\label{HensleyZ2}
    c_1|K|
    \le
    |K\cap \theta^\perp|\, h_{Z_2(K)}(\theta)
    \le
    c_2|K|.
\end{equation*}
\end{proposition}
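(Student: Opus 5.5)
The plan is to reduce the estimate to a one-dimensional statement about log-concave densities by looking at the marginal of the uniform measure on $K$ in the direction $\theta$. Fix $\theta\in S^{n-1}$ and set $f(t)=|K\cap(\theta^\perp+t\theta)|$, the $(n-1)$-dimensional volume of the parallel section at height $t$. By the Brunn--Minkowski inequality, $f^{1/(n-1)}$ is concave on its support, so $f$ is log-concave. Put $g=f/|K|$. Then $g$ is a log-concave probability density on $\mathbb{R}$, and since $K$ is centered it has barycenter $\int t\,g(t)\,dt=0$. Fubini gives
\begin{align*}
h_{Z_2(K)}(\theta)^2=\int_{\mathbb{R}} t^2 g(t)\,dt=:\sigma^2,\qquad |K\cap\theta^\perp|=f(0)=|K|\,g(0).
\end{align*}
The claim is therefore equivalent to $c_1\le g(0)\,\sigma\le c_2$ for every centered log-concave probability density $g$ on $\mathbb{R}$.

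I will prove this in two steps, both comparing against $M=\|g\|_\infty$.

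\textbf{Step 1: $g(0)$ is comparable to $M$.} Trivially $g(0)\le M$. The reverse bound $M\le e\,g(0)$ is the one-dimensional case of Fradelizi's inequality for centered log-concave densities. I would prove it directly: write $g=e^{-\varphi}$ with $\varphi$ convex, and use that the barycenter lies at $0$ together with Jensen-type comparisons against the exponential density, which is the extremal case.

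\textbf{Step 2: $\sigma M$ is bounded above and below by absolute constants.}
\begin{itemize}
\item For the lower bound: among densities bounded by $M$, the variance is minimized by the uniform density on an interval of length $1/M$. Hence $\sigma^2\ge 1/(12M^2)$.
\item For the upper bound, I use log-concavity. Let $x_0$ be a point where $g(x_0)=M$. The superlevel set $\{g\ge M/e\}$ is an interval of length at most $e/M$, because $g$ has total mass $1$. By log-concavity, $g$ decays at least geometrically away from this interval: for $|x-x_0|\ge k e/M$ we have $g(x)\le Me^{-k}$. Summing over these shells gives $\int (t-x_0)^2 g(t)\,dt\le C/M^2$. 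Since the variance is at most the second moment about any point, $\sigma^2\le C/M^2$.
\end{itemize}

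Combining the two steps gives $g(0)\sigma\ge M\sigma/e\ge 1/(e\sqrt{12})$ and $g(0)\sigma\le M\sigma\le\sqrt C$. Multiplying through by $|K|$ yields the proposition.

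I expect the main obstacle to be Step 1, the comparison $M\le e\,g(0)$. This is exactly where the centering hypothesis enters; without it, $g(0)$ could be tiny, for instance when $0$ is near the end of the support. If the clean constant $e$ is awkward to obtain, any absolute constant suffices. A cruder argument also works: the barycenter of a log-concave density lies in the bulk, so $g(0)\ge c\,M$ by the same geometric-decay estimate used in Step 2, applied on both sides of $x_0$.
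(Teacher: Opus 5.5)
Your proof is correct. It also goes further than the paper, which gives no argument for this proposition and simply cites Hensley for the symmetric case and Fradelizi for the centered case.

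Your reduction to the centered log-concave density $g(t)=|K\cap(\theta^\perp+t\theta)|/|K|$ is essentially the argument behind those references. For symmetric $K$, $g$ is even and log-concave, so $g(0)=\|g\|_\infty$ and your Step 1 is vacuous; that is Hensley's situation. Fradelizi's centered extension is exactly your Step 1.

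The real difference is that those works use the full $\tfrac{1}{n-1}$-concavity of $g$ to get sharp explicit constants. You use only log-concavity, which loses sharpness but gives a short argument valid for any centered log-concave measure. That is all the paper needs.

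Step 2 checks out. The bathtub principle gives $\sigma^2\ge 1/(12M^2)$. Your shell estimate amounts to $g(x)\le eMe^{-M|x-x_0|/e}$, whence $\sigma^2\le\int(t-x_0)^2g\le 4e^4/M^2$.

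The one input you did not prove is $M\le e\,g(0)$. Citing Fradelizi there puts you on the same footing as the paper. However, your fallback does not work as phrased. Decay estimates only bound $g$ from above, and knowing $|x_0|\le C/M$ does not stop $g(0)$ from being tiny, since a log-concave density can drop abruptly at the edge of its support.

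To repair the fallback, use log-concavity on the segment from $x_0$ through $0$, together with $|x_0|\le C/M$. If $g(0)=\epsilon M$, the mass and first moment on the far side of $0$ are then $O(\epsilon)$ and $O(\epsilon/M)$. These cannot balance the first moment of order $1/M$ on the other side.

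The direct proof you had in mind is short. Write $g=e^{-\psi}$ on its support $[a,b]$, take boundary values as one-sided limits (these terms vanish at infinite endpoints), and fix $x\in[a,b]$. Integration by parts gives
\[
\int_a^b g(y)\,\psi'(y)(y-x)\,dy = 1-g(b)(b-x)+g(a)(a-x)\le 1 .
\]
Convexity gives $\psi'(y)(y-x)\ge\psi(y)-\psi(x)$, hence $\int g\psi\le \psi(x)+1$. By Jensen's inequality and the centering, $\psi(0)\le\int g\psi$. Therefore $\psi(0)\le\psi(x)+1$, that is, $g(x)\le e\,g(0)$ for every $x$.
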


We remark that Hensley proved this when $K$ is origin-symmetric. The extension to the centered case, along with a discussion of explicit values for $c_1, c_2$, was undertaken by Fradelizi \cite{Fradelizi}.

We also record the relation between $Z_2(K)$ and the isotropic constant. For centered convex bodies $K$ in $\mathbb{R}^n$,
\begin{equation}\label{isotropicconstantdef}
    L_K^n
    =
    \frac{\det(M_K)^{1/2}}{|K|}.
\end{equation}
Equivalently, after rescaling $K$ to have volume one and applying a volume-preserving linear transformation, the covariance matrix becomes $L_K^2 I_n$.

\begin{lemma}\label{Z2VolumeLemma}
For every centered convex body $K$ in $\mathbb R^n$,
\begin{equation*}\label{Z2Volume}
    |Z_2(K)|=\omega_n L_K^n |K|,
\end{equation*}
where $\omega_n=|B_2^n|$.
\end{lemma}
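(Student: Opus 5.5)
The identity follows directly from the description $Z_2(K)=M_K^{1/2}B_2^n$ together with the formula \eqref{isotropicconstantdef} for $L_K^n$. The plan is to compute the volume of the ellipsoid as a linear image of the Euclidean ball and then substitute for the determinant.

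First, we note that $M_K$ is symmetric and positive definite. Indeed, for any $\theta\ne 0$,
\begin{align*}
\langle M_K\theta,\theta\rangle=\frac{1}{|K|}\int_K\langle x,\theta\rangle^2\,dx>0,
\end{align*}
because $K$ has nonempty interior, so $\langle x,\theta\rangle$ cannot vanish on all of $K$. Hence $M_K^{1/2}$ is a well-defined invertible linear map, and the change of variables formula gives
\begin{align*}
|Z_2(K)|=\bigl|M_K^{1/2}B_2^n\bigr|=\det\bigl(M_K^{1/2}\bigr)\,|B_2^n|=\det(M_K)^{1/2}\,\omega_n .
\end{align*}

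Second, \eqref{isotropicconstantdef} gives $\det(M_K)^{1/2}=L_K^n|K|$. Substituting this yields $|Z_2(K)|=\omega_nL_K^n|K|$.

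If one instead takes the isotropic-position normalization as the definition of $L_K$, then \eqref{isotropicconstantdef} must be checked first, and this is the only step that needs care. Write $\tilde K=T(K)$ with $T(x)=Ax$, where $A$ is linear and $|\det A|=|K|^{-1/n}$, so that $\tilde K$ is isotropic. The barycenter stays at the origin, so no translation is needed. A change of variables gives
\begin{align*}
M_{\tilde K}=AM_KA^{T}.
\end{align*}
Since $M_{\tilde K}=L_K^2I_n$, taking determinants gives
\begin{align*}
L_K^{2n}=\det(A)^2\det(M_K)=|K|^{-2/n}\det(M_K).
\end{align*}
Taking square roots yields $L_K^n=\det(M_K)^{1/2}/|K|$, which is \eqref{isotropicconstantdef}.

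There is no real obstacle here. The only point to verify is this affine invariance, namely that $\det(M_K)^{1/2}/|K|$ is unchanged under linear maps and equals $L_K^n$ in isotropic position.
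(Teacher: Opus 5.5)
Your main argument (the first two paragraphs) is the paper's proof: $|Z_2(K)|=|M_K^{1/2}B_2^n|=\omega_n\det(M_K)^{1/2}$, then substitute \eqref{isotropicconstantdef}, which the paper simply takes as given.

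Your optional derivation of \eqref{isotropicconstantdef} from the isotropic-position definition contains an error. For $\tilde K=AK$ to have volume one you need $|\det A|\,|K|=1$, so $|\det A|=|K|^{-1}$. The quantity $|K|^{-1/n}$ is the dilation factor $\lambda$ with $|\lambda K|=1$; its determinant is $\lambda^n$. As written, your display $L_K^{2n}=|K|^{-2/n}\det(M_K)$ gives $L_K^n=|K|^{-1/n}\det(M_K)^{1/2}$ on taking square roots, which is not the conclusion you state. With $\det(A)^2=|K|^{-2}$, the same computation gives $L_K^n=\det(M_K)^{1/2}/|K|$ as intended. The rest of that paragraph, including $M_{\tilde K}=AM_KA^{T}$ and the invariance of $\det(M_K)^{1/2}/|K|$ under linear maps, is correct.
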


\begin{proof}
Since $Z_2(K)=M_K^{1/2}B_2^n$, we have
\begin{equation*}
    |Z_2(K)|
    =
    \omega_n \det(M_K)^{1/2}.
\end{equation*}
Using \eqref{isotropicconstantdef}, this becomes
\begin{equation*}
    |Z_2(K)|=\omega_n L_K^n |K|,
\end{equation*} as desired.
\end{proof}

\subsection{Projection bodies}

If $L$ is a convex body in $\mathbb{R}^n$, its projection body $\Pi L$ is the origin-symmetric convex body whose support function is given by
\begin{equation*}\label{projectionbodydef}
    h_{\Pi L}(\theta)
    =
    |L|\theta^{\perp}|,
    \qquad \theta\in S^{n-1}.
\end{equation*}
Equivalently, by Cauchy's projection formula,
\begin{equation*}\label{CauchyProjectionFormula}
    h_{\Pi L}(x)
    =
    \frac{1}{2}\int_{S^{n-1}} |\langle x,u\rangle|\,dS_L(u),
    \qquad x\in \mathbb R^n,
\end{equation*}
where $S_L$ denotes the surface area measure of $L$. In particular, $h_{\Pi L}$ is an $L_1$-type norm on $\mathbb R^n$. Hence, the polar projection body
\begin{equation*}
    \Pi^\circ L
    := (\Pi L)^{\circ} = 
    \{x\in \mathbb R^n:h_{\Pi L}(x)\le 1\}
\end{equation*}
is the unit ball of an $n$-dimensional subspace of $L_1$.

We shall use the following theorem of Ball \cite{Ball1991VolumeRatios} and Bourgain and Milman \cite{BourgainMilman1987VolumeRatio}: unit balls of finite-dimensional subspaces of $L_1$ have uniformly bounded volume ratio.

\begin{theorem}[\cite{Ball1991VolumeRatios}, \cite{BourgainMilman1987VolumeRatio}]\label{BallL1VolumeRatioLemma}
Let $Q\subset \mathbb R^n$ be the unit ball of an $n$-dimensional subspace of $L_1$. Then there exists an origin-symmetric ellipsoid $E\subset Q$ such that
\begin{equation*}\label{BallL1VolumeRatio}
    |Q|\le C^n |E|,
\end{equation*}
where $C>0$ is an absolute constant.
\end{theorem}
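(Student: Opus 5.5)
The plan is to follow Ball's argument: put $Q$ in a suitable position via Lewis' lemma, then bound $|Q|$ from above with the geometric Brascamp--Lieb inequality and find an explicit Euclidean ball inside $Q$.

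\textbf{Step 1: reduction to $\ell_1^N$.} By a standard approximation argument, every $n$-dimensional subspace of $L_1$ embeds $(1+\varepsilon)$-isomorphically into $\ell_1^N$ for some finite $N$. A $(1+\varepsilon)$-isomorphism changes volumes only by a factor $(1+\varepsilon)^{\pm n}$. So it suffices to treat norms of the form
\begin{equation*}
\norm{x}_Q=\sum_{i=1}^N |\langle x,w_i\rangle|, \qquad w_i\in\mathbb{R}^n,
\end{equation*}
with $\varepsilon$ fixed, say $\varepsilon=1$, which only affects the constant $C$.

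\textbf{Step 2: Lewis position (main obstacle).} Since volume ratio is invariant under linear maps, we may apply a suitable $T\in GL_n$. The key input is Lewis' lemma for subspaces of $L_1$. It provides a position in which
\begin{equation*}
\norm{x}_Q=\lambda\sum_i c_i|\langle x,u_i\rangle|, \qquad u_i\in S^{n-1},\; c_i>0,\; \sum_i c_i\, u_i\otimes u_i=I_n,
\end{equation*}
for some $\lambda>0$. Taking traces gives $\sum_i c_i=n$. To prove it, I would take $T$ maximizing $\det T$ subject to $\int \norm{T x}$-type constraints. Concretely, I would maximize $\det T$ over $T$ with $\sum_i |Tw_i|\le n$. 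The Lagrange condition at the maximizer then yields $\sum_i |T w_i|^{-1}\,(Tw_i)\otimes (Tw_i)=I_n$ up to scaling. Setting $u_i=Tw_i/|Tw_i|$ and $c_i\propto|Tw_i|$ gives the representation above. Verifying the existence of the maximizer and this variational identity carefully is the step I expect to require the most care.

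\textbf{Step 3: upper bound for $|Q|$.} Integrating in polar coordinates gives
\begin{equation*}
n!\,|Q|=\int_{\mathbb{R}^n}e^{-\norm{x}_Q}\,dx=\int_{\mathbb{R}^n}\prod_i \bigl(e^{-\lambda|\langle x,u_i\rangle|}\bigr)^{c_i}dx .
\end{equation*}
The geometric Brascamp--Lieb inequality (Ball's form, valid since $\sum c_i u_i\otimes u_i=I_n$) bounds the right-hand side by
\begin{equation*}
\prod_i\Bigl(\int_{\mathbb{R}}e^{-\lambda|t|}dt\Bigr)^{c_i}=(2/\lambda)^{n}.
\end{equation*}
Hence $|Q|\le (2/\lambda)^n/n!$.

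\textbf{Step 4: an inscribed ellipsoid.} By Cauchy--Schwarz and the isotropy condition,
\begin{equation*}
\norm{x}_Q\le\lambda\Bigl(\sum c_i\Bigr)^{1/2}\Bigl(\sum c_i\langle x,u_i\rangle^2\Bigr)^{1/2}=\lambda\sqrt n\,|x|.
\end{equation*}
Therefore $E=(\lambda\sqrt n)^{-1}B_2^n\subset Q$, and $|E|=\omega_n(\lambda\sqrt n)^{-n}$.

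\textbf{Step 5: the ratio.} The factor $\lambda$ cancels, leaving
\begin{equation*}
\frac{|Q|}{|E|}\le\frac{2^n n^{n/2}}{n!\,\omega_n}.
\end{equation*}
Since $\omega_n=\pi^{n/2}/\Gamma(n/2+1)$, Stirling's formula shows that the right-hand side is at most $C^n$. Undoing the linear map $T$ and the $\ell_1^N$ approximation, we obtain an origin-symmetric ellipsoid $E\subset Q$ with $|Q|\le C^n|E|$, as claimed.
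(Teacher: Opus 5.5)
Your proposal is correct. The $\ell_1^N$ approximation, the Lewis position obtained by maximizing $\det T$ subject to $\sum_i |Tw_i|\le n$, the geometric Brascamp--Lieb bound $n!\,|Q|\le (2/\lambda)^n$, and the Cauchy--Schwarz inclusion $(\lambda\sqrt n)^{-1}B_2^n\subset Q$ all check out, and they give $|Q|/|E|\le 2^n n^{n/2}/(n!\,\omega_n)\le (2e/\pi)^{n/2}$.

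The paper does not prove this statement itself; it only cites Ball and Bourgain--Milman. Your argument is essentially Ball's proof from the cited reference, showing that $\ell_1^n$ has maximal volume ratio among $n$-dimensional subspaces of $L_1$.
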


Bourgain and Milman \cite{BourgainMilman1987VolumeRatio} proved that the volume ratio of
a finite-dimensional normed spaced can be bounded above, solely in terms of the cotype-2
constant of the space. In particular, finite-dimensional
subspaces of $L_1$ have uniformly bounded volume ratios. Ball \cite{Ball1991VolumeRatios} proved the isometric form of this result: for each $p \in [1, \infty]$, $\ell_p^n$ has maximal volume ratio among
$n$-dimensional subspaces of $L_p$

Applying this to $Q=\Pi^\circ L$, we obtain the following consequence.

\begin{corollary}\label{PolarProjectionVolumeRatioCorollary}
For every convex body $L\subset \mathbb R^n$, there exists an origin-symmetric ellipsoid $E\subset \Pi^\circ L$ such that
\begin{equation*}\label{PolarProjectionVolumeRatio}
    |\Pi^\circ L|\le C^n |E|.
\end{equation*}
\end{corollary}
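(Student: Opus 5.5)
The corollary follows by applying Theorem~\ref{BallL1VolumeRatioLemma} with $Q=\Pi^\circ L$. The only work is to check that $\Pi^\circ L$ really is the unit ball of an $n$-dimensional subspace of $L_1$, i.e.\ that $h_{\Pi L}$ is a genuine norm given by a linear isometric embedding of $\mathbb R^n$ into an $L_1$ space. I would do this via Cauchy's projection formula.

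Let $\mu=\tfrac12 S_L$, a finite Borel measure on $S^{n-1}$. Define $T:\mathbb R^n\to L_1(S^{n-1},\mu)$ by $(Tx)(u)=\langle x,u\rangle$. This map is linear, and Cauchy's formula gives
\begin{equation*}
    h_{\Pi L}(x)=\int_{S^{n-1}}|\langle x,u\rangle|\,d\mu(u)=\|Tx\|_{L_1(\mu)}.
\end{equation*}
Hence $h_{\Pi L}$ is a seminorm: it is even and sublinear, as a support function of an origin-symmetric body.

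The key step is positive definiteness, equivalently injectivity of $T$. Suppose $h_{\Pi L}(x)=0$ for some $x\neq 0$. Then $S_L$ is concentrated on the great subsphere $x^\perp\cap S^{n-1}$. This is impossible for a convex body: $L$ has nonempty interior, so its projection onto $(x/|x|)^{\perp}$ has positive $(n-1)$-dimensional volume, i.e.\ $h_{\Pi L}(x/|x|)=|L|(x/|x|)^\perp|>0$. Alternatively, one can invoke the standard fact that the surface area measure of a convex body is not concentrated on any great subsphere.

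So $T$ is a linear isometry from $(\mathbb R^n,h_{\Pi L})$ onto the $n$-dimensional subspace $T(\mathbb R^n)\subset L_1(\mu)$. Consequently $\Pi^\circ L=\{x: h_{\Pi L}(x)\le 1\}$ is, after identification via $T$, the unit ball of that subspace. It is also a genuine origin-symmetric convex body: it is compact because $h_{\Pi L}$ is a norm on a finite-dimensional space.

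Applying Theorem~\ref{BallL1VolumeRatioLemma} to $Q=\Pi^\circ L$ produces an origin-symmetric ellipsoid $E\subset\Pi^\circ L$ with $|\Pi^\circ L|\le C^n|E|$, which is the claim. One small point is that the theorem is stated for subspaces of $L_1$ on a general measure space rather than of $L_1[0,1]$. This is harmless, since any finite measure on $S^{n-1}$ gives an $L_1$ space, and finite-dimensional subspaces of $L_1(\mu)$ embed isometrically into $L_1[0,1]$.

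The only mildly delicate step is positive definiteness; everything else is a direct transcription.
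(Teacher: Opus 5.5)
Your proposal is correct and follows the paper's route. The paper likewise uses Cauchy's projection formula to note that $h_{\Pi L}$ is an $L_1$-type norm, so that $\Pi^\circ L$ is the unit ball of an $n$-dimensional subspace of $L_1$, and then applies Theorem~\ref{BallL1VolumeRatioLemma} directly; your extra checks (positive definiteness via $|L|\theta^{\perp}|>0$, and embedding $L_1(\mu)$ into $L_1[0,1]$) simply fill in details the paper leaves implicit.
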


We shall also use Zhang's projection inequality \cite{Zhang1991RestrictedChord} for the main theorem. It gives the sharp lower bound for the volume product of a convex body and the polar of its projection body.

\begin{proposition}[Zhang's projection inequality \cite{Zhang1991RestrictedChord}]\label{ZhangProjectionInequalityLemma}
For every convex body $L\subset \mathbb R^n$,
\begin{equation*}\label{ZhangProjectionInequality}
    |L|^{n-1}|\Pi^\circ L|
    \ge
    \frac{1}{n^n}\binom{2n}{n}.
\end{equation*}
Moreover, equality holds precisely for simplices.
\end{proposition}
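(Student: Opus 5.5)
The plan is to prove the inequality through the covariogram of $L$, combining the Brunn--Minkowski inequality with polar integration. Define
\begin{equation*}
g(x)=|L\cap (L+x)|, \qquad x\in\mathbb R^n .
\end{equation*}
Its support is the difference body $DL=L-L$, and $g(0)=|L|$. By Fubini, $\int_{\mathbb R^n} g(x)\,dx=|L|^2$.

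There are two key facts about $g$.

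\textbf{First fact.} $g^{1/n}$ is concave on $DL$. This follows from Brunn--Minkowski, because $L\cap(L+((1-t)x+ty))\supseteq (1-t)\bigl(L\cap(L+x)\bigr)+t\bigl(L\cap(L+y)\bigr)$.

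\textbf{Second fact.} For $u\in S^{n-1}$, the one-sided radial derivative of $g$ at $0$ is
\begin{equation*}
\frac{d}{dr}\Big|_{r=0^+} g(ru)=-|L|u^\perp|=-h_{\Pi L}(u).
\end{equation*}
To see this, note that $|L\setminus(L+ru)|$ is, to first order, $r$ times the area of the shadow of $L$ in direction $u$. Equivalently, it is $r\int_{S^{n-1}}\langle v,u\rangle_+\,dS_L(v)$, which equals $r\,h_{\Pi L}(u)$ by Cauchy's formula.

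\textbf{Pointwise bound.} Concavity of $g^{1/n}$ along the ray $r\mapsto ru$ lets us bound $g^{1/n}$ above by its tangent line at $r=0$:
\begin{equation*}
g(ru)^{1/n}\le |L|^{1/n}-\frac{r}{n}|L|^{\frac1n-1}h_{\Pi L}(u),
\end{equation*}
so that
\begin{equation*}
g(ru)\le |L|\Bigl(1-\frac{r\,h_{\Pi L}(u)}{n|L|}\Bigr)_+^{n}.
\end{equation*}

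\textbf{Integration.} Integrate this bound in polar coordinates:
\begin{equation*}
|L|^2=\int_{S^{n-1}}\int_0^\infty g(ru)r^{n-1}\,dr\,du
\le |L|\,(n|L|)^n\,B(n,n+1)\int_{S^{n-1}} h_{\Pi L}(u)^{-n}\,du .
\end{equation*}
Now use $\int_{S^{n-1}}h_{\Pi L}^{-n}=n|\Pi^\circ L|$ and $B(n,n+1)=\frac{(n-1)!\,n!}{(2n)!}$. The right-hand side becomes $n^n|L|^{n+1}|\Pi^\circ L|/\binom{2n}{n}$. Rearranging gives exactly
\begin{equation*}
|L|^{n-1}|\Pi^\circ L|\ge \frac{1}{n^n}\binom{2n}{n}.
\end{equation*}

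\textbf{Expected obstacles.} The delicate analytic step is justifying the derivative formula for $g$ for an arbitrary convex body. For this I would first treat polytopes, where the volume of $L\setminus(L+ru)$ is explicitly a sum over facets, and then pass to the limit using continuity of $h_{\Pi L}$ and of $g$ in the Hausdorff metric.

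The main obstacle, however, is the equality case. Equality forces $g^{1/n}$ to be affine on every segment $[0,\rho_{DL}(u)u]$. In other words, $g^{1/n}$ is a cone function over $DL$, which is equivalent to $L\cap(L+x)$ being a homothetic copy of $L$ for every $x\in DL$. I would then invoke the classical characterization, from the equality case of the Rogers--Shephard difference-body inequality (Choquet's theorem), that this property holds exactly for simplices. Conversely, a direct computation for the simplex confirms equality.
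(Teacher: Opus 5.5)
Your proof is correct. The paper gives no proof of this proposition; it is quoted from Zhang. Your argument is the standard covariogram proof of Zhang's inequality, so the only comparison is with a citation, and I add a few remarks.

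\textbf{The derivative formula.} This needs no polytope approximation. Write $\ell(y)$ for the length of the chord of $L$ through $y\in L|u^\perp$ parallel to $u$. Then $g(ru)=\int_{L|u^\perp}(\ell(y)-r)_+\,dy$, hence $|L|-g(ru)=\int_{L|u^\perp}\min\{\ell(y),r\}\,dy$. Monotone convergence then gives the right derivative $-|L|u^\perp|$ at once. If you did approximate by polytopes, you would have to pass to the limit in the tangent-line bound rather than in the derivatives, since Hausdorff convergence does not control one-sided derivatives.

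\textbf{The equality case.} This is cleanest stated as follows. Equality forces $g(x)=|L|(1-\|x\|_{DL})^n$, and integrating gives $|DL|=\binom{2n}{n}|L|$. That is exactly the Rogers--Shephard equality case, so $L$ is a simplex. Conversely, for a simplex $g$ has this cone form, since Rogers--Shephard is an equality there. Matching its slope at $0$ with your second fact gives $\rho_{DL}(u)=n|L|/h_{\Pi L}(u)$, i.e.\ $DL=n|L|\,\Pi^\circ L$. Together with $|DL|=\binom{2n}{n}|L|$, this gives equality in Zhang's inequality.

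\textbf{What the paper actually needs.} The paper uses this inequality only up to a factor $C^n$, and for that your first steps already suffice. Since $g>0$ on the interior of $DL$, the tangent-line bound gives $DL\subseteq n|L|\,\Pi^\circ L$. Brunn--Minkowski ($|DL|\ge 2^n|L|$) then yields $|L|^{n-1}|\Pi^\circ L|\ge (2/n)^n$, which is enough for Theorem~\ref{mainthm}. The full integration and the equality analysis are needed only for the sharp constant and its extremizers.
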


\subsection{Ellipsoid Comparison}

We record a simple comparison lemma for ellipsoids.

\begin{lemma}\label{EllipsoidRadialSupportLemma}
Let $E$ and $F$ be origin-symmetric ellipsoids in $\mathbb R^n$. Suppose that
\begin{equation*}
    \rho_E(\theta)\le h_F(\theta)
    \qquad\text{for all } \theta\in S^{n-1}.
\end{equation*}
Then
\begin{equation*}
    |E|\le |F|.
\end{equation*}
\end{lemma}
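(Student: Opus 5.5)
The plan is to reduce the lemma to an inequality between two positive definite matrices and then use a change of variables that makes Hadamard's inequality point the right way. Write $F=A B_2^n$ with $A$ symmetric positive definite, so that $h_F(\theta)=|A\theta|=\langle C\theta,\theta\rangle^{1/2}$ with $C=A^2$. Write $E=\{x:\langle Bx,x\rangle\le 1\}$ with $B$ positive definite, so that $\rho_E(\theta)=\langle B\theta,\theta\rangle^{-1/2}$. Then
\begin{equation*}
|E|=\omega_n\det(B)^{-1/2},\qquad |F|=\omega_n\det(C)^{1/2},
\end{equation*}
so the claim $|E|\le|F|$ is exactly
\begin{equation*}
\det(B)\det(C)\ge 1 .
\end{equation*}
The hypothesis $\rho_E\le h_F$ on $S^{n-1}$, rewritten by homogeneity for all $x\in\mathbb R^n$, becomes
\begin{equation*}
\langle Bx,x\rangle\,\langle Cx,x\rangle\ \ge\ |x|^4 .
\end{equation*}

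The obvious first attempt is to test this on an eigenbasis of $B$ or of $C$ and multiply. This fails: Hadamard's inequality $\det M\le\prod_i\langle Mu_i,u_i\rangle$ then gives an upper bound on $\det B$ or $\det C$, whereas we need a lower bound on the product. This is the main obstacle. To get around it, I will first move $C$ to the identity.

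Substitute $x=C^{-1/2}y$ in the quartic inequality and set $B'=C^{-1/2}BC^{-1/2}$. The inequality becomes
\begin{equation*}
\langle B'y,y\rangle\,|y|^2\ \ge\ |C^{-1/2}y|^4=\langle C^{-1}y,y\rangle^2 .
\end{equation*}
Now test this on an orthonormal eigenbasis $u_1,\dots,u_n$ of $B'$, with eigenvalues $b'_i$. This gives
\begin{equation*}
b'_i\ \ge\ \langle C^{-1}u_i,u_i\rangle^2 \qquad (i=1,\dots,n).
\end{equation*}
Multiplying over $i$ and now applying Hadamard's inequality to $C^{-1}$ in the basis $(u_i)$, which is the favorable direction, we get
\begin{equation*}
\det B'=\prod_i b'_i\ \ge\ \Bigl(\prod_i\langle C^{-1}u_i,u_i\rangle\Bigr)^2\ \ge\ \det(C^{-1})^2 .
\end{equation*}

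Since $\det B'=\det B/\det C$, this reads $\det B/\det C\ge (\det C)^{-2}$, that is, $\det B\,\det C\ge 1$. By the volume formulas above, this is $|E|\le|F|$.
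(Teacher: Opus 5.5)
Your proof is correct, but it takes a different route from the paper's.

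The paper writes $E$ and $F$ as images of $B_2^n$ under positive definite matrices with semi-axes $a_1\le\cdots\le a_n$ and $b_1\le\cdots\le b_n$. It then uses a Courant--Fischer type dimension count. The span of the eigenvectors for $a_k,\dots,a_n$ and the span of those for $b_1,\dots,b_k$ have dimensions summing to $n+1$, so they share a unit vector. Testing the hypothesis on that vector gives $a_k\le b_k$ for every $k$, and multiplying gives the volume bound.

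You instead work directly with determinants. You correctly note that testing in an eigenbasis of either quadratic form puts Hadamard's inequality in the wrong direction. You repair this with the substitution $x=C^{-1/2}y$, which turns the form of $F$ into $|y|^2$. Testing in an eigenbasis of $B'=C^{-1/2}BC^{-1/2}$ and applying Hadamard to $C^{-1}$ then goes the right way. The identities $\langle Cx,x\rangle=|y|^2$, $|x|^4=\langle C^{-1}y,y\rangle^2$ and $\det B'=\det B/\det C$ all check out.

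Your argument is a compact piece of matrix algebra that needs only Hadamard's inequality, but it yields only $\det B\,\det C\ge 1$. The paper's argument proves the stronger axis-by-axis statement $a_k\le b_k$. This is equivalent to $E$ being contained in a rotated copy of $F$, so it would also compare rotation-invariant functionals that are monotone under inclusion, such as surface area or mean width. Only the volume comparison is needed for Theorem~\ref{mainthm}, so your version suffices there.
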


\begin{proof}
Write
\begin{equation*}
    E=A B_2^n,
    \qquad
    F=B B_2^n,
\end{equation*}
where $A$ and $B$ are symmetric positive definite matrices. Then
\begin{equation*}
    \rho_E(\theta)=\frac{1}{\|A^{-1}\theta\|_2},
    \qquad
    h_F(\theta)=\|B\theta\|_2.
\end{equation*}
Thus the hypothesis gives
\begin{equation*}
    1\le \|A^{-1}\theta\|_2\|B\theta\|_2
    \qquad\text{for all } \theta\in S^{n-1}.
\end{equation*}

Let $a_1\le \cdots \le a_n$ be the eigenvalues of $A$, and let
$b_1\le \cdots \le b_n$ be the eigenvalues of $B$. For each
$k=1,\ldots,n$, let $U_k$ be the span of the eigenvectors of $A$
corresponding to $a_k,\ldots,a_n$, and let $V_k$ be the span of the
eigenvectors of $B$ corresponding to $b_1,\ldots,b_k$. Since
\begin{equation*}
    \dim U_k+\dim V_k=(n-k+1)+k=n+1,
\end{equation*}
there exists a unit vector $\theta_k\in U_k\cap V_k$. For this vector,
\begin{equation*}
    \|A^{-1}\theta_k\|_2\le \frac{1}{a_k},
    \qquad
    \|B\theta_k\|_2\le b_k.
\end{equation*}
Therefore,
\begin{equation*}
    1\le \|A^{-1}\theta_k\|_2\|B\theta_k\|_2
    \le \frac{b_k}{a_k}.
\end{equation*}
Hence $a_k\le b_k$ for every $k$. Multiplying over $k$ gives
\begin{equation*}
    \det A\le \det B.
\end{equation*}
Since $|E|=\omega_n\det A$ and $|F|=\omega_n\det B$, this proves
$|E|\le |F|$.
\end{proof}

\section{Proof of the main theorem}

\begin{proof}[Proof of Theorem~\ref{mainthm}]
We prove the estimate
\begin{equation}\label{LKrefinedbound}
    |K|\le C\sqrt n\, L_K^{n/(n-1)} |L|.
\end{equation}
The theorem then follows from the slicing theorem, since $L_K\le C$~\eqref{slicing}.

Set
\begin{equation*}
    Q=\Pi^\circ L.
\end{equation*}
By the definition of the polar projection body,
\begin{equation*}
    \rho_Q(\theta)
    =
    \frac{1}{|L|\theta^{\perp}|}
    \qquad\text{for all } \theta\in S^{n-1}.
\end{equation*}
Recalling the hypothesis
\begin{equation*}
    |K\cap \theta^\perp|\le |L|\theta^{\perp}|
    \qquad\text{for all } \theta\in S^{n-1},
\end{equation*}
it follows that
\begin{equation*}
    \rho_Q(\theta)
    =
    \frac{1}{|L|\theta^{\perp}|}
    \le
    \frac{1}{|K\cap \theta^\perp|}.
\end{equation*}
By Hensley's estimate, Proposition~\ref{HensleyZ2Lemma},
\begin{equation*}
    \frac{1}{|K\cap \theta^\perp|}
    \le
    C\frac{h_{Z_2(K)}(\theta)}{|K|}.
\end{equation*}
Consequently,
\begin{equation}\label{rhoQZ2comparison}
    \rho_Q(\theta)
    \le
    C\frac{h_{Z_2(K)}(\theta)}{|K|}
    \qquad\text{for all } \theta\in S^{n-1}.
\end{equation}

Define the ellipsoid
\begin{equation*}
    F=\frac{C}{|K|}Z_2(K),
\end{equation*}
with the same constant $C$ as in
\eqref{rhoQZ2comparison}. Then,
\begin{equation}\label{rhoQhFcomparison}
    \rho_Q(\theta)\le h_F(\theta)
    \qquad\text{for all } \theta\in S^{n-1}.
\end{equation}

By the volume ratio theorem for subspaces of $L_1$, Theorem~\ref{BallL1VolumeRatioLemma}, applied to
$Q=\Pi^\circ L$, there exists an origin-symmetric ellipsoid $J\subset Q$
such that
\begin{equation}\label{BallApplied}
    |Q|\le C^n |J|.
\end{equation}
Since $J\subset Q$, we have
\begin{equation*}
    \rho_J(\theta)\le \rho_Q(\theta)
    \qquad\text{for all } \theta\in S^{n-1},
\end{equation*}
which, combined with \eqref{rhoQhFcomparison}, yields
\begin{equation*}
    \rho_J(\theta)\le h_F(\theta)
    \qquad\text{for all } \theta\in S^{n-1}.
\end{equation*}
By Lemma~\ref{EllipsoidRadialSupportLemma},
\begin{equation*}
    |J|\le |F|.
\end{equation*}
Therefore, by \eqref{BallApplied},
\begin{equation*}\label{PiPolarUpperByF}
    |\Pi^\circ L|=|Q|\le C^n |F|.
\end{equation*}

We now compute $|F|$. Since
\begin{equation*}
    F=\frac{C}{|K|}Z_2(K),
\end{equation*}
Lemma~\ref{Z2VolumeLemma} gives
\begin{equation*}
    |F|
    =
    \left(\frac{C}{|K|}\right)^n |Z_2(K)|
    =
    C^n\omega_n L_K^n |K|^{1-n}.
\end{equation*}
Thus
\begin{equation}\label{PiPolarUpper}
    |\Pi^\circ L|
    \le
    C^n\omega_n L_K^n |K|^{1-n}.
\end{equation}

On the other hand, Zhang's projection inequality, Proposition~\ref{ZhangProjectionInequalityLemma}, gives
\begin{equation}\label{PiPolarLower}
    |\Pi^\circ L|
    \ge
    \frac{1}{n^n}\binom{2n}{n}|L|^{1-n}.
\end{equation}
Combining \eqref{PiPolarUpper} and \eqref{PiPolarLower}, we obtain
\begin{equation*}
    \frac{1}{n^n}\binom{2n}{n}|L|^{1-n}
    \le
    C^n\omega_n L_K^n |K|^{1-n}.
\end{equation*}
Equivalently,
\begin{equation}\label{volumecomparisonbeforestirling}
    \left(\frac{|K|}{|L|}\right)^{n-1}
    \le
    C^n
    \frac{n^n\omega_n}{\binom{2n}{n}}
    L_K^n.
\end{equation}

It remains only to estimate the dimensional factor. We use the standard bounds
\begin{equation*}
    \omega_n^{1/n}\le \frac{C}{\sqrt n}
\end{equation*}
and
\begin{equation*}
    \binom{2n}{n}\ge \frac{c4^n}{\sqrt n}.
\end{equation*}
These imply
\begin{equation*}
    \left(
        \frac{n^n\omega_n}{\binom{2n}{n}}
    \right)^{1/(n-1)}
    \le C\sqrt n.
\end{equation*}
Taking $(n-1)$-st roots in \eqref{volumecomparisonbeforestirling} therefore yields
\begin{equation*}
    \frac{|K|}{|L|}
    \le
    C\sqrt n\,L_K^{n/(n-1)}.
\end{equation*}
This proves \eqref{LKrefinedbound} and the theorem.
\end{proof}

\section{Acknowledgements}

The author was supported in part by Simons Foundation Collaboration Grant 601948 DJ. The author used ChatGPT Pro 5.5 during the development of this work, including to suggest proof strategies and to find references.

\bibliographystyle{alpha}
\bibliography{biblio.bib}

@article {busemannpetty,
    AUTHOR = {Busemann, H. and Petty, C. M.},
     TITLE = {Problems on convex bodies},
   JOURNAL = {Math. Scand.},
  FJOURNAL = {Mathematica Scandinavica},
    VOLUME = {4},
      YEAR = {1956},
     PAGES = {88--94},
      ISSN = {0025-5521},
   MRCLASS = {52.0X},
  MRNUMBER = {0084791},
MRREVIEWER = {W. Fenchel},
       DOI = {10.7146/math.scand.a-10457},
       URL = {https://doi.org/10.7146/math.scand.a-10457},
}

@article{Fradelizi,
  author  = {Fradelizi, Matthieu},
  title   = {Hyperplane Sections of Convex Bodies in Isotropic Position},
  journal = {Beitr{\"a}ge zur Algebra und Geometrie},
  volume  = {40},
  number  = {1},
  pages   = {163--183},
  year    = {1999}
}

@article{Zhang1991RestrictedChord,
  author  = {Zhang, Gaoyong},
  title   = {Restricted Chord Projection and Affine Inequalities},
  journal = {Geometriae Dedicata},
  volume  = {39},
  number  = {2},
  pages   = {213--222},
  year    = {1991},
  doi     = {10.1007/BF00182294}
}

@article{BourgainMilman1987VolumeRatio,
  author  = {Bourgain, J. and Milman, V. D.},
  title   = {New Volume Ratio Properties for Convex Symmetric Bodies in $\mathbb{R}^n$},
  journal = {Inventiones Mathematicae},
  volume  = {88},
  number  = {2},
  pages   = {319--340},
  year    = {1987},
  doi     = {10.1007/BF01388911}
}

@article{Ball1991VolumeRatios,
  author  = {Ball, Keith},
  title   = {Volume Ratios and a Reverse Isoperimetric Inequality},
  journal = {Journal of the London Mathematical Society},
  series  = {2},
  volume  = {44},
  number  = {2},
  pages   = {351--359},
  year    = {1991},
  doi     = {10.1112/jlms/s2-44.2.351}
}

@article{hensley,
  author  = {Hensley, Douglas},
  title   = {Slicing Convex Bodies---Bounds for Slice Area in Terms of the Body's Covariance},
  journal = {Proceedings of the American Mathematical Society},
  volume  = {79},
  number  = {4},
  pages   = {619--625},
  year    = {1980}
}

@article{shepard,
    AUTHOR = {Shephard, G. C.},
     TITLE = {Shadow systems of convex sets},
   JOURNAL = {Israel J. Math.},
  FJOURNAL = {Israel Journal of Mathematics},
    VOLUME = {2},
      YEAR = {1964},
     PAGES = {229--236},
      ISSN = {0021-2172},
   MRCLASS = {52.30},
  MRNUMBER = {0179686},
MRREVIEWER = {A. M. Macbeath},
       DOI = {10.1007/BF02759738},
       URL = {https://doi.org/10.1007/BF02759738},
}

@article {papa,
    AUTHOR = {Papadimitrakis, Michael},
     TITLE = {On the {B}usemann-{P}etty problem about convex, centrally
              symmetric bodies in {$\bold R^n$}},
   JOURNAL = {Mathematika},
  FJOURNAL = {Mathematika. A Journal of Pure and Applied Mathematics},
    VOLUME = {39},
      YEAR = {1992},
    NUMBER = {2},
     PAGES = {258--266},
      ISSN = {0025-5793},
   MRCLASS = {52A38 (52A40)},
  MRNUMBER = {1203282},
MRREVIEWER = {R. J. Gardner},
       DOI = {10.1112/S0025579300014996},
       URL = {https://doi.org/10.1112/S0025579300014996},
}

@article{giankold,
    AUTHOR = {Giannopoulos, Apostolos and Koldobsky, Alexander},
     TITLE = {Variants of the {B}usemann-{P}etty problem and of the
              {S}hephard problem},
   JOURNAL = {Int. Math. Res. Not. IMRN},
  FJOURNAL = {International Mathematics Research Notices. IMRN},
      YEAR = {2017},
      VOLUME = {2017},
    NUMBER = {3},
     PAGES = {921--943},
      ISSN = {1073-7928},
   MRCLASS = {52A40},
  MRNUMBER = {3658155},
MRREVIEWER = {Stefano Campi},
       DOI = {10.1093/imrn/rnw046},
       URL = {https://doi.org/10.1093/imrn/rnw046},
}

@article {Gardner1,
    AUTHOR = {Gardner, R. J.},
     TITLE = {Intersection bodies and the {B}usemann-{P}etty problem},
   JOURNAL = {Trans. Amer. Math. Soc.},
  FJOURNAL = {Transactions of the American Mathematical Society},
    VOLUME = {342},
      YEAR = {1994},
    NUMBER = {1},
     PAGES = {435--445},
      ISSN = {0002-9947},
   MRCLASS = {52A38 (52A40)},
  MRNUMBER = {1201126},
MRREVIEWER = {W. J. Firey},
       DOI = {10.2307/2154703},
       URL = {https://doi.org/10.2307/2154703},
}

@article {Zhang1,
    AUTHOR = {Zhang, Gaoyong},
     TITLE = {Intersection bodies and the {B}usemann-{P}etty inequalities in
              {${\bf R}^4$}},
   JOURNAL = {Ann. of Math. (2)},
  FJOURNAL = {Annals of Mathematics. Second Series},
    VOLUME = {140},
      YEAR = {1994},
    NUMBER = {2},
     PAGES = {331--346},
      ISSN = {0003-486X},
   MRCLASS = {52A38 (52A20 52A40 52B12)},
  MRNUMBER = {1298716},
MRREVIEWER = {P. R. Goodey},
       DOI = {10.2307/2118603},
       URL = {https://doi.org/10.2307/2118603},
}

@article {Gardner2,
    AUTHOR = {Gardner, R. J.},
     TITLE = {A positive answer to the {B}usemann-{P}etty problem in three
              dimensions},
   JOURNAL = {Ann. of Math. (2)},
  FJOURNAL = {Annals of Mathematics. Second Series},
    VOLUME = {140},
      YEAR = {1994},
    NUMBER = {2},
     PAGES = {435--447},
      ISSN = {0003-486X},
   MRCLASS = {52A38 (52A15 52A40 52B12)},
  MRNUMBER = {1298719},
MRREVIEWER = {P. R. Goodey},
       DOI = {10.2307/2118606},
       URL = {https://doi.org/10.2307/2118606},
}

@article {Zhang2,
    AUTHOR = {Zhang, Gaoyong},
     TITLE = {A positive solution to the {B}usemann-{P}etty problem in
              {$\bf{R}^4$}},
   JOURNAL = {Ann. of Math. (2)},
  FJOURNAL = {Annals of Mathematics. Second Series},
    VOLUME = {149},
      YEAR = {1999},
    NUMBER = {2},
     PAGES = {535--543},
      ISSN = {0003-486X},
   MRCLASS = {52A38 (44A12 52A20)},
  MRNUMBER = {1689339},
MRREVIEWER = {Apostolos A. Giannopoulos},
       DOI = {10.2307/120974},
       URL = {https://doi.org/10.2307/120974},
}

@article {schneider,
    AUTHOR = {Schneider, Rolf},
     TITLE = {Zur einem {P}roblem von {S}hephard \"uber die {P}rojektionen
              konvexer {K}\"orper},
   JOURNAL = {Math. Z.},
  FJOURNAL = {Mathematische Zeitschrift},
    VOLUME = {101},
      YEAR = {1967},
     PAGES = {71--82},
      ISSN = {0025-5874},
   MRCLASS = {52.40},
  MRNUMBER = {0218976},
MRREVIEWER = {K. Strubecker},
       DOI = {10.1007/BF01135693},
       URL = {https://doi.org/10.1007/BF01135693},
}

@article{Hosle,
  author  = {Hosle, Johannes},
  title   = {On the Comparison of Measures of Convex Bodies via Projections and Sections},
  journal = {International Mathematics Research Notices},
  volume  = {2021},
  number  = {17},
  pages   = {13046--13074},
  year    = {2021},
  doi     = {10.1093/imrn/rnz215}
}

@article {ballshadows,
    AUTHOR = {Ball, Keith},
     TITLE = {Shadows of convex bodies},
   JOURNAL = {Trans. Amer. Math. Soc.},
  FJOURNAL = {Transactions of the American Mathematical Society},
    VOLUME = {327},
      YEAR = {1991},
    NUMBER = {2},
     PAGES = {891--901},
      ISSN = {0002-9947},
   MRCLASS = {52A40 (52A20)},
  MRNUMBER = {1035998},
MRREVIEWER = {B\'ela Uhrin},
       DOI = {10.2307/2001829},
       URL = {https://doi.org/10.2307/2001829},
}

@incollection {milmanpajor,
    AUTHOR = {Milman, V. D. and Pajor, A.},
     TITLE = {Isotropic position and inertia ellipsoids and zonoids of the
              unit ball of a normed {$n$}-dimensional space},
 BOOKTITLE = {Geometric aspects of functional analysis (1987--88)},
    SERIES = {Lecture Notes in Math.},
    VOLUME = {1376},
     PAGES = {64--104},
 PUBLISHER = {Springer, Berlin},
      YEAR = {1989},
   MRCLASS = {52A20 (46B20)},
  MRNUMBER = {1008717},
MRREVIEWER = {W. J. Firey},
       DOI = {10.1007/BFb0090049},
       URL = {https://doi.org/10.1007/BFb0090049},
}

@incollection {petty,
    AUTHOR = {Petty, C. M.},
     TITLE = {Projection bodies},
 BOOKTITLE = {Proc. {C}olloquium on {C}onvexity ({C}openhagen, 1965)},
     PAGES = {234--241},
 PUBLISHER = {Kobenhavns Univ. Mat. Inst., Copenhagen},
      YEAR = {1967},
   MRCLASS = {52.40},
  MRNUMBER = {0216369},
MRREVIEWER = {G. D. Chakerian},
}

@article {GKS,
    AUTHOR = {Gardner, R. J. and Koldobsky, A. and Schlumprecht, T.},
     TITLE = {An analytic solution to the {B}usemann-{P}etty problem on
              sections of convex bodies},
   JOURNAL = {Ann. of Math. (2)},
  FJOURNAL = {Annals of Mathematics. Second Series},
    VOLUME = {149},
      YEAR = {1999},
    NUMBER = {2},
     PAGES = {691--703},
      ISSN = {0003-486X},
   MRCLASS = {52A38 (44A12 52A20)},
  MRNUMBER = {1689343},
MRREVIEWER = {Apostolos A. Giannopoulos},
       DOI = {10.2307/120978},
       URL = {https://doi.org/10.2307/120978},
}

@article{KlartagLehec2025SlicingGuan,
  author  = {Klartag, Boaz and Lehec, Joseph},
  title   = {Affirmative Resolution of {Bourgain}'s Slicing Problem Using {Guan}'s Bound},
  journal = {Geometric and Functional Analysis},
  volume  = {35},
  number  = {4},
  pages   = {1147--1168},
  year    = {2025},
  doi     = {10.1007/s00039-025-00718-w}
}

@incollection {bourgainbound,
    AUTHOR = {Bourgain, J.},
     TITLE = {On the distribution of polynomials on high-dimensional convex
              sets},
 BOOKTITLE = {Geometric aspects of functional analysis (1989--90)},
    SERIES = {Lecture Notes in Math.},
    VOLUME = {1469},
     PAGES = {127--137},
 PUBLISHER = {Springer, Berlin},
      YEAR = {1991},
   MRCLASS = {52A21 (46B07 46E30 52A20)},
  MRNUMBER = {1122617},
MRREVIEWER = {Wolfgang Lusky},
       DOI = {10.1007/BFb0089219},
       URL = {https://doi.org/10.1007/BFb0089219},
}

@article{Bourgain1986MaximalConvexBodies,
  author  = {Bourgain, J.},
  title   = {On High-Dimensional Maximal Functions Associated to Convex Bodies},
  journal = {American Journal of Mathematics},
  volume  = {108},
  number  = {6},
  pages   = {1467--1476},
  year    = {1986},
  doi     = {10.2307/2374532}
}

@inproceedings{Bourgain1987BanachHarmonic,
  author    = {Bourgain, J.},
  title     = {Geometry of Banach Spaces and Harmonic Analysis},
  booktitle = {Proceedings of the International Congress of Mathematicians, Berkeley 1986},
  pages     = {871--878},
  publisher = {American Mathematical Society},
  address   = {Providence, RI},
  year      = {1987}
}
\end{document}